\newtheorem{theorem}{Theorem}
\newtheorem{lemma}[theorem]{Lemma}
\newtheorem{remark}[theorem]{Remark}
\newenvironment{proof}[1][Proof]{\noindent\textbf{#1.} }{\ \rule{0.5em}{0.5em}}
\begin{document}

\title{Some comparisons between the Variational rationality, Habitual
domain, and DMCS approaches }
\date{September, 28, 2014}
\author{G. C. Bento \and A. Soubeyran}

\maketitle


The \textquotedblleft Habitual domain"\textbf{\ (}HD) approach and the
\textquotedblleft Variational rationality" (VR) approach belong to the same
strongly interdisciplinary and very dispersed area of research: human
stability and change dynamics (see Soubeyran, 2009, 2010, for an extended
survey), including physiological, physical, psychological and strategic
aspects, in Psychology, Economics, Management Sciences, Decision theory,
Game theory, Sociology, Philosophy, Artificial Intelligence,\ldots. These
two approaches are complementary. They have strong similarities and strong
differences. They focus attention on both similar and different stay and
change problems, using different concepts and different mathematical
tools. When they use similar concepts (a lot), they often have different
meaning. We can compare them with respect to the problems and topics they
consider, the behavioral principles they use, the concepts they modelize,
the mathematical tools they use, and their results.

\section{Problems and topics}

\paragraph{The \textquotedblleft Habitual domain" (HD) theory.}

For a survey see Yu, Chen (2010). Among other points, this approach focus
attention on:

\begin{itemize}
\item[A)] behavioral habitual domain principles (Yu, 1991, Yu, Chen, 2010);

\item[B)] habit formation and the dynamic of activation levels and attention
efforts, following a succession of periods (Chan, Yu, 1985);

\item[C)] problem solving, modelized as a given competency set expansion
(CSE) problem within a given period (see Shi, Yu, 1999). It represents the
most developped part of the HD theory, which includes a lot of related
papers using different mathematical tools to find the optimal expansion set
(see among others, Yu, Zhang, 1990, Li, Yu, 1994, Shi, Yu 1996, Tzeng and
alii, 1998, Li, Chiang, Yu, 2000,\ldots);

\item[D)] innovation dynamics (ID), consider that competency sets are dynamic
and can change over time. It examines a succession of competency set, expansion problems which refer to decision making with changeable spaces (DMCS) problems. Innovation dynamics examine also cover-discover (CD) problems (Larbani,Yu, 2012). A covering problem is a competency set
expansion (CSE) problem. It refers to \textquotedblleft how to transform a
given competence set CS* into a set that contains a targeted competence set
CS\textquotedblright. Discovering refers to the following problem:
\textquotedblleft Given a competence set, what is the best way to make use
of it to solve unsolved problems or to create value? The process under the problem solving or value creation, involves discovering. A discovering
process can be defined as identifying how to use available tangible and
intangible skills, competences, and resources to solve an unsolved problem
or to produce new ideas, concepts, products, or services that satisfy some
newly-emerging needs of people. ....Discovering contributes to reducing the
charge level or relieving the pain of some targeted people" (Larbani,Yu,
2012);

\item[E)] Decision making with changeable spaces (DMCS) games with changing
minds (Yu, Larbani, 2009, Larbani, Yu, 2009, 2009b, 2011, 2012).
\end{itemize}

\paragraph{The \textquotedblleft Variational rationality" (VR) approach}

For an extended presentation of this model, see Soubeyran (2009, 2010). 
This approach focuses attention on a single very general problem: the famous self regulation problem, which involves a lot of many multi-disciplinary aspects in different disciplines, employing different terminologies. Among other more specific topics, it considers:

\begin{itemize}
\item[A')] behavioral stability and change principles;

\item[B')] habit formation and break (HFB) problems at the individual level,
as well as routine formation and break (RFB) problems at the organizational
level;

\item[C')] exploration-exploitation (EEL) learning dynamics;

\item[D')] adaptive self regulation (SR) problems (goal setting, goal
striving, goal revision and goal pursuit) which belong to the general class
of decision making with changeable spaces, moving goals and variable
preferences (DMCSMG) problems. Such self regulation processes modelize the
interrelated dynamics of motivational (desiring and willing), cognitive
(perception and knowledge acquisition) and emotional aspects of human
behavior. Bounded rational satisficing processes represent an important
application;

\item[E')] variational games (VG) with self regulating agents (Attouch et al., 2007, Attouch et al., 2008, Flores-Baz\'{a}n, Luc, Soubeyran,
2012, Flam et al., 2012, Cruz Neto et al., 2013);

\item[F')] application of self regulation problems to variational analysis.
Variational analysis and most of the related algorithms: variational
principles (Luc, Soubeyran, 2013) exact and inexact alternating algorithms
(Attouch et al., \ 2007, Attouch et al., 2008, Attouch et al., 2010),
proximal algorithms with Bregman and quasi distances (Cruz Neto et al.,
2010, Moreno et al., 2012), exact and inexact proximal algorithms on
manifolds (Cruz Neto et al., 2013), multiobjective proximal algorithms
(Bento, Cruz Neto, Soubeyran, 2013), local search inexact proximal
algorithms (Attouch, Soubeyran, 2010), inexact descent methods (Bento and
alii, 2013), equilibrium problems (Bento et al., 2013), and dual
equilibrium problems (Moreno et al., 2012), variational inequalities
(Attouch, Soubeyran, 2006, Luc et al., 2010), trust region methods
(Villacorta et al., 2013), Tabu search algorithms (Martinez-Legaz,
Soubeyran, 2007), sequential decision making (Martinez-Legaz, Soubeyran,
2013) etc, use the same variational principles as VR. These algorithms
may be seen as reduced forms of the VR self regulation model.
\end{itemize}

\section{ Behavioral principles}

\paragraph{HD behavioral principles}

HD theory is based on eight behavioral principles \ (Yu, Chen, 2010). Four
hypotheses capture the basic workings of the brain: Circuit pattern
hypothesis H1, Unlimited capacity hypothesis H2, Efficient restructuring
hypothesis H3 and Analogy/Association hypothesis H4. Four other hypotheses
summarize how our mind works: Goal setting and State evaluation hypothesis
H5 (a basic function of our mind), Charge structure and attention allocation
hypothesis H6 (how we allocate our attention to various events), Discharge
hypothesis H7 (a least resistance principle that humans use to release their
charges) and the Information internal and external inputs hypothesis H8.

\paragraph{VR behavioral principles}

VR approach is based on, at least, nine behavioral principles (Soubeyran,
2009, 2010):

\begin{itemize}
\item[K1)] agents are bounded rational (Simon, 1955). They do not optimize,
except for simple problems. As soon as a problem is complex, it satisfice
within a consideration set. They consider, in each step, only a limited subset
of alternatives related to the behavioral chain: \textquotedblleft means and
capabilities ---$>$ actions ----$>$ performances ----$>$ goals ----$>$
desires". This consideration set changes from one period to the next;

\item[K 2)] human activities follow a succession of temporary stays and
changes;

\item[K3)] in each period, the agent is satisfied, or remains unsatisfied,
relative to different domains of it's life;

\item[K4)] the agent problem, in each step, is  to choose one of two alternatives: to temporary
stay or to change (\textquotedblleft should I stay, should I go?");

\item[K5)] the agent balances, in each step, between advantages and
inconvenients to change or to stay, and more generally, motivation and
resistance to change or to stay. They consider worthwhile temporary stays or
changes;

\item[K6)] an agent is engaged in a \textquotedblleft stop and go" stays and
changes course pursuit, between setting, each period, the same old desired
ends and/or new ones, as well as finding related feasible means to reach or
approach each of them;

\item[K7)] an agent is partially able to self regulate this possibly
interrupted course pursuit. He can set goals, strive for them, and revise
them (having reached some goals, to reset the same goals is a possibility);

\item[K8)] experience matters and partially determines the current
behavioral chain. Then, almost all things, including preferences, can
change. There is a course pursuit between changing preferences, actions,
capabilities and beliefs. A current action is chosen on the basis of the
current preference. This changes the current preference which, in turn,
changes the choice of the future action which, in turn\ldots;

\item[K9)] Goal pursuit stops when the agent reaches a behavioral trap,
which is worthwhile reaching, starting from the initial position, and where
he prefers to stay than to move again.
\end{itemize}

\paragraph{Comparisons}

Circuit pattern hypothesis H1 refers to our resistance to change concept
(repetitions of thoughts, ideas and actions reinforce circuits, making them
difficult to abandon). Unlimited capacity hypothesis H2 states that
\textquotedblleft practically, every normal brain has the capacity to encode
and store all thoughts, concepts and messages that one intends to".
Efficient Restructuring hypothesis H3 states that \textquotedblleft encoded thoughts, concepts and messages H1 are organized and stored
systematically as data bases for efficient retrieving. Furthermore, according to the dictation of attention they are continuously restructured so that relevant ones may be  retrieved efficiently in order to release charges".
Analogy/Association hypothesis H4 supposes that \textquotedblleft the
perception of new events, subjects or ideas can be learned primarily by
analogy and/or association with what is already known. When faced with a new
event, subject or idea, the brain first investigates its features and
attributes in order to establish a relationship with what is already known;
by analogy and/or association. Once the right relationship has been
established, the whole of the past knowledge (preexisting memory structure)
is automatically brought to bear on the interpretation and understanding of
the new event, subject or idea". The above hypotheses can be used to derive bounded
rationality (Simon, 1955) and to our resistance to change
concept (see K5), where new knowledge and past knowledge are not mixed
immediately. For example, as a lot of experiences on habit formation and
break have shown in Psychology, many attitudes and beliefs temporary resist
to change.

The goal setting and state evaluation hypothesis H5 supposes that
\textquotedblleft each one of us has a set of goal functions and for each
goal function we have an ideal state or equilibrium point to reach and
maintain (goal setting). Continuously, consciously or
subconsciously we monitor, where we are, relative to the ideal state or equilibrium
point (state evaluation). Goal setting and state evaluation are dynamic,
interactive, and are subject to physiological forces, self-suggestion,
external information forces, current data bank (memory) and information
processing capacity". The VR approach supposes (see K6) variable ideal
states (moving aspirations and desires) and an adaptive course pursuit
between where we are (uncomfortable moving status quo) and where we want to
be (moving aspirations, or desirable ends).

The charge structures and attention allocation hypothesis H6 supposes that
\textquotedblleft each event is related to a set of goal functions. When
there is an unfavorable deviation of the perceived value from the ideal,
each goal function will produce various levels of charge. The totality of
the charges by all goal functions is called the charge structure and it may
change dynamically. At any point in time, our attention will be paid to the
event which has the most influence on our charge structure". Discharge hypothesis H7 supposes that \textquotedblleft to release charges, we tend to select the action which yields the lowest remaining charge (the remaining
charge is the resistance to the total discharge), known as the least resistance principle". The VR approach agrees with these two hypothesis H6 and H7. Indeed, this is  very similar to the famous \textquotedblleft discrepancy reduction" principle in Psychology. They represent the first
side of a self regulation process (see K6). The other side is a
\textquotedblleft discrepancy production" process (goal setting, goal
revision; goal pursuit).

The information Input hypothesis H8 supposes that \textquotedblleft humans
have innate needs to gather external information. Unless attention is paid,
external information inputs may not be processed". The VR approach agrees with this, which can be related to the general concept of consideration sets (see K1).

\section{Concepts, variables and parameters}

\paragraph{HD concepts.}

To save space, let us list these concepts with respect to only three
different HD problems:

\begin{itemize}
\item[i)] Stabilization of an habitual domain problem. Following an infinite
sequence of periods (a transition, in the parlance of the VR approach), the
agent considers, in each period $t$, his potential domain at time $t$, his
actual domain at time $t$, his activation probability at time $t$, his
reachable domain at time $t$, and his attention and activation levels at
time $t$;

\item[ii)] Competency set expansion problem. Staying within a given period of time, defined as a single change in the VR approach (the case of a transition, where the agent follows an infinite sequence of periods unexamined), the agent considers: his given competency set as well as his given acquired skill set, both true or perceived, a given table of costs
needed for acquiring a given skill directly from another given skill, his
cost of acquiring a new skill, a chosen optimal expansion process, decision traps, etc;

\item[iii)] Decision making in changeable spaces (DMCS) problems. They can
be represented by a time dependent list including, at each time $t$: i) a list
of changing or changeable decision elements (a subset of alternatives, a
subsets of criteria, an outcome measured in term of the criteria, and a
preference) and ii) a list of changing or changeable decision environmental
facets (information inputs, an habitual domain, a subset of other involved
decision making agents, and a subset of unknowns). All the elements of this
list can be changed with time.
\end{itemize}

Innovation problems, like successions of competency set expansion problems
and cover-discover problems, as well as DMCS game problems are very
important problems which, however, will not be examined here since our paper
considers only an agent.\textbf{\ }

\paragraph{VR concepts.}

They are relative to the self-regulation problem. More precisely, given
a transition (an infinite sequence of periods), the agent follows a
succession of single temporary stays or changes. Given his current
experience which changes with time, he considers, at each period, using his
current changeable consideration set, his current changeable aspiration gap. That is defined as the current discrepancy between where he is, the status quo,
and where he wants to be (representing his current changeable aspiration level), his current changeable goals, which helps to fill a portion of the current
aspiration gap, as well as, to approach his current aspiration level, a
future chosen action to be done, repeating the old action, or doing a new
action, the regeneration of old capabilities to be able to repeat the old
action, the deletion of old capabilities and the acquisition of new
capabilities and means to be able to do the new action, his expected new
performance and payoff generated by this action, his expected costs to be
able to stay and his expected costs to be able to change, his expected
advantages to change, inconvenient to change, motivation to change,
resistance to change. The distinction can be made between an ex ante
perceived and an ex post realized concept, variable or parameter, etc.. All
this elements can change. Then, worthwhile
changes and variational traps are the keys variational rationality concepts.

The same concepts can be used to examine the functioning of variational
games (for a reduced form, see inertial games, Attouch, Redont, Soubeyran,
2007, Flores-Baz\'{a}n, Luc, Soubeyran, 2012). To save space we do not
examine them in this paper which focus on a single agent.

\paragraph{Comparisons.}

\begin{itemize}
\item[1)] In the parlance of the VR approach, stability and change dynamics
consider two dynamics: an intra period dynamic (a single temporary stay or
change made of a succession of elementary stay or change operations) and an
inter periods dynamic (a succession of periods, named a transition). The
Chan, Yu (1985) paper on stable habitual domains examines transitions, while
a competency set expansion process (Shi,Yu, 1999) refers to an inter period
dynamic (a single change);

\item[2)] HD theory examines decision making with changeable spaces (DMCS)
problems, while the VR approach considers decision making with changeable
spaces and moving goals (DMCSMG) problems. However (DMCS) problems can
choose goals as well. VR self regulation processes focus attention on i)
goal setting and goal revision (hence changing aspiration levels and goals)
and ii) goal striving (discrepancy or charge reduction);

\item[3)] Main HD choice variables are skills, competency sets, and
activation propensities. Main VR choice variables are capabilities, actions
and goals where actions refer to paths of elementary operations.

\item[4)] VR and HD payoffs are experience dependent in their most general
formulations (however, the resolution of the competency set expansion
problem, being mathematically so complex, seems to require the opposite.
See, for example, Shi,Yu, 1996, theorem 5.1, where the expected return
function ER depends only on the elements acquired from Tr$\setminus $%
Sk\ldots );

\item[5)] The two concepts of competency set and capabilities, while having
some similarities, differ. In the HD theory, a competency set is a 
collection of resources and skills. For the VR approach a capability is a
path of operations including script and timing, related means
(physiological, physical, cognitive, motivational and emotional, tangible
and intangible ingredients, downstream and upstream tools and machines used
to perform these operations, following the given script); However, both
approaches consider, as alternative formulations, subsets and paths to
modelize competency sets and capabilities.

\item[6)] The HD definition of asymmetric costs for acquiring a new skill
directly from a given skill (see, for example Shi, Yu, 1996), is a
particular and reduced form of the VR definition of  change costs to be able to
change, defined as the cost to acquire, directly or indirectly, the capability
to do a new action, starting from having the capability to repeat an old
action. They include direct and indirect costs to delete some old elementary
capabilities, which should not be used anymore and pollute\ldots, costs to
regenerate some other old capabilities, which will be reused, and costs
to acquire new elementary capabilities. HD costs to acquire a new skill from
an old one are direct costs. They seem to include only acquisitions costs
and they represent the infimum of costs to be able to change (see Soubeyran,
2009);

\item[7)] The VR resistance to change concept (see Soubeyran, 2009, 2010)
differs from the HD resistance to change concept (Larbani,Yu, 2012). For the
HD theory, the discharge hypothesis supposes that \textquotedblleft to
release charges, we tend to select the action, which yields the lowest
remaining charge (the remaining charge is the resistance to the total
discharge); this is called the least resistance principle" (Larbani,Yu,
2012). In the VR theory resistance to change is the disutility of
inconvenients to change capabilities.

\item[8)] In the HD approach, motivation to change is modelized in term of
charges and discharges. The VR concept closest to this  will be the
utility-disutility of charges and discharges (as tensions). The VR
motivation-resistance to change balance may be compared to the
excitation-inhibition functions (Chan, Yu,1985);

\item[9)] The definition of traps differs. The HD theory (Larbani,Yu, 2012)
says that a decision maker is in a decision trap at time $t$, if his
competence set is trapped in some area and cannot expand to fully cover the
targeted competence set.  The resolution of challenging problems generally
involve covering and discovering. Discovering requires a
target to cover, while the covering process requires discovering when it
falls in a decision trap". The VR approach defines a variational trap with
respect to an initial situation (action, a doing, or a state, some having or
being). This type of situation is worthwhile to reach, starting from this
initial situation. However, being there, not worthwhile to leave. Optima,
equilibria, decision traps, habits, routines, rules and norms represent
specific cases. For game situations, win-win outcomes are examples of
variational traps.

\item[10)] The VR approach does not modelize the expansion of the competency
sets. On the contrary, Larbani,Yu (2012) give three HD toolboxes which show
how to expand and enrich the actual and reachable domains and look into the
depth of potential domains:

\begin{itemize}
\item[a)] the seven empowerment operators;

\item[b)] eight methods for expanding the habitual domain M8. Learning
actively, M9. Projecting from a higher position, M10. Active association,
M11. Changing the relevant parameters, M12. Changing the environment, M13.
Brainstorming, M14. Retreating in order to advance, M15. Praying or
meditating (Larbani, Yu, 2012);

\item[c)] nine principles of deep knowledge: M16 the deep and down
principle, M17 the alternating principle, M18, the contrasting and
complementing principle, M19, the revolving and cycling principle, M20. the
inner connection principle, M21 the changing and transforming principle,
M22 the contradiction principle, M23 the cracking and ripping principle,
M24 the void principle (Larbani, Yu, 2012).
\end{itemize}
\end{itemize}

\section{Rationality of a single agent.}

\paragraph{Rationality and the HD theory.}

In this paper we consider an isolated agent. Thus, the comparisons between
HD and VR game situations will be examined elsewhere. HD theory examines
three different problems and proposes three different behavioral models for
a single agent, who can be fully or boundedly rational, depending of the model.

\begin{itemize}
\item[i)] The stabilization of an habitual domain problem. This situation is
modelized by a stabilization of activation propensities model, which
represents a reduced form of the stabilization of an habitual domain problem
(Chan,Yu, 1985). This model is a differential equation, a variant of the
famous global pattern formation model (Cohen, Grossberg,1983). The authors
examine its convergence (weak and strong global stability). In this case an
agent does not optimize. He is boundedly rational;

\item[ii)] The competency set expansion problem. In this case the main focus
is on an optimal \textquotedblleft Problem solving" approach. More precisely,
an agent has a given problem E to solve. To succeed to solve his problem,
he must own a collection of skills defined as the competency set, Tr(E) (true
or perceived), related to the full resolution of the problem. The agent
starts the resolution equipped with a given competency set, the acquired
skill set Sk (true or perceived), defined as the collection of skills he owns
at the beginning, before starting the resolution. The problem is to find an
optimal path of expansion of his competency set, from the initial position
Sk to the final position Tr(E), representing a fixed given goal. A lot of
different algorithms have been used to find the optimal solution for
intermediate and compound skills and asymmetric cost functions (tree
expansion processes, deduction graphs, spanning trees,\ldots ; see Li,
Chiang, Yu, 2000). In this case the agent is fully (substantively) rational;
The opposite case of a bounded rational agent, using a satisficing
process to solve a competency set expansion problem, remains an 
interesting open problem in this area of research.

\item[iii)] Decision making and optimization in changeable spaces (DMOCS)
problems. In a new setting, Larbani, Yu (2012) used some dedicated
optimization methods and suggest to search for other new optimizing methods
to solve them. Cover-discover problems belong to this class of new
optimization problems. However, Larbani, Yu (2012) states that "the operator
Min in the models (6)-(8) should be understood in the sense of satisfaction,
not in the sense of absolute minimum". They notice (Larbani,Yu, 2012, p 742)
that optimization must be understood in term of reducing the charge level of
the decision maker to a satisfactory or acceptable level. This is in
accordance with the satisficing principle (Simon, 1955).
\end{itemize}

\paragraph{Bounded rationality and the VR approach.}

VR theory proposes a unified model for human behavior, focusing on
worthwhile temporary stays and changes, variational traps, and self
regulation processes (goal setting, goal striving, goal revision, goal
pursuit processes). It is well adapted to complex, changing and high stake
decision making problems (see Kunreuther et al., 2002), where agents
cannot be fully rational. In a complex and changing world, full
optimization, at each step, is too costly and even not economizing, because
situations (spaces of feasible means and capabilities, actions,
performances, payoffs, intermediate and final goals, desires and
aspirations) change during each step. Hence, an optimal solution at time $t$ may be
irrelevant at time $t+1$. An agent tries to reach, at each step, a moving
satisficing level (not a fixed one). In each step, he considers worthwhile
temporary stays or changes, which include, as special cases, adaptive,
satisficing, local, approximate, inexact solutions (and optimal solutions as
limit cases).

\section{Behavioral stability issues: \textquotedblleft how habits and
routines form and break".}

\paragraph{Different mathematical tools for stability issues.}

The HD theory uses, at least, three main mathematical tools to examine
stability issues (convergence to a stable and desirable final situation) and
innovation problems (reaching a targeted competency set, starting from a
given initial one): i) the dynamics of pattern formation (Grossberg, 1973,
1978, 1980, Cohen, Grossberg, 1983) as the main tool to examine the dynamic
of activation propensities (Chan, Yu, 1985), ii) mathematical programming
and different graph methods, to study competency set formation and innovation
problems (see, among many other papers, Shi,Yu, 1996), iii) Markov chains, to
examine the convergence of second order DMCS games to desirable and stable
issues (Yu, Larbani, 2009, Larbani,Yu, 2009, 2011, 2012). In contrast, the
VR approach (Soubeyran, 2009, 2010) starts from variational rationality
principles in Behavioral Sciences and offers, as immediate applications, a
lot of famous mathematical principles of variational analysis (Ekeland
theorem, Bronsted Lemma, and other equivalent principles,\ldots; see
Flores-Baz\'{a}n et al., 2012, Luc, Soubeyran, 2013 ). In turn, it uses a
lot of well known variational algorithms (proximal algorithms, descent
methods, variational inequalities, trust region methods, equilibrium
problems,\ldots) in order to help to refine the VR approach relative to stability and
innovative issues, for isolated and interacting agents (VR games).

While strongly related to the VR approach, competency set expansion problems
do not refer to stability issues, but to innovative issues, while the
dynamics of pattern formation (activation propensities) and second order
DMCS games main focus, are on stability issues. To save space, and since
our inexact proximal algorithm paper chooses, for an application, habit's
and routine's formation at the individual and organizational levels (a
benchmark stability issue), we will only compare how the HD and VR
approaches solve, in different ways, this very difficult problem of
\textquotedblleft habits and routines formation and break". The comparison of
the HD and VR approaches relative to DMCS problems, competency set expansion problems, innovation problems, and second order DMCS games, will be examined elsewhere. However a preliminary step is done, later, for the
comparison of second order DMCS games and VR games.

Let us compare how the HD theory, using the dynamics of pattern formation
(Grossberg, 1973, 1978, 1980, 1983) and the VR approach modelize and
explain how habit-and-routine form and break on two grounds: i) explain
convergence to a final issue, ii) explain why this final issue is desirable
and stable.

Notice that the pattern and the variational rationality methods both involve a balance principe. Worthwhile changes, balance motivation and resistance to change while changes in pattern allocations, balance excitation and inhibition inputs and signals.

\paragraph{HD theories of habit's and routine's formation and break.}

The HD intuition is the following (Chan, Yu,1985) \textquotedblleft \ldots
the existence of stable HD, based on a set of hypotheses is described.
Roughly, as each human being learns, his HD grows with time, but at a
decreasing rate, because the probability for an arrival idea to be new with
respect to HD, becomes smaller as HD gets larger. Thus, unless unexpected
extraordinary events arrive, HD will reach its stable state. If
extraordinary events do not arrive very often, habitual ways of thinking and
action will prevail most of the time. This observation is the main
motivation to use 'habitual'\ as the adjective. More formally, HD theory of
habit and routine formation considers the convergence of the allocations of
time and effort (activities propensities) to different activities up to a
final pattern (an habitual pattern of time and effort allocations), using a
variant of the famous pattern formation model (Grossberg, 1973, 1978, 1980,
1983). Notice that his model of progressive pattern formation does not
explain why this final allocation is desirable and stable. However, as said
before, in the contect of DMCS problems, win-win situations refer to
desirable and stable final outcomes (Yu, Larbani, 2009, Larbani,Yu, 2009,
2011, 2012).

\paragraph{ VR theories of habit's and routine's formation and break.}

The VR intuition is very different. Agents make worthwhile changes and stop
to change when there is no way to be able to consider and make a new
worthwhile change. The convergence may be in finite or infinite time (see
Bento, Soubeyran, 2014, Flores Bazan, Luc, Soubeyran, 2012; Bento, Cruz
Neto, Soares, Soubeyran, 2014). The model explains why, and under which
conditions, this final issue is desirable and stable. This is the case when
it is a variational trap. Moreover, the formalized VR theory of habit's and
routine's formation fits very well (see below) the main non formalized
experimental findings of the different theories of \textquotedblleft how
habits and routines form and break",  Psychology as well as in Management
Sciences, within a bounded rationality approach, and, to some degree, in
Economics, in a perfectly rationality context.

\textbf{Habit's formation in Psychology and Economics. }In Psychology habits
represent \textquotedblleft learned sequences of acts that have become
automatic responses to specific cues, and are functional in obtaining
certain goals or end states"; see Verplanken, Aarts (1999). For (Duhigg,
2012), an habit is an automatized action (mental or physical experience), a
more or less fixed way of thinking, willing, feeling and doing which follows
an automatized three steps pattern: a given trigger which activates the
action, a process (or script) that the action follows, and a reward (benefit
or gain). Hence habits are learned automatic behaviors. Repetition in a
similar recurrent context is a necessary condition for habits to develop.
Frequency of past behavior and context stability like internal cues (moods
and goals) and external cues ( partners and external goals) determine habit
strength. Habits represent a form of automaticity (Bargh,1994). They are
more or less conscious and intentional (wanted, i.e., the perception of
contexts is more or less goal directed, triggered by goals or by other cues).
They are learned in a progressive way. They can be difficult to control,
hard to form, because they follow a progressive learning process, and more
or less hard to break,\ given some weak or strong motivation and resistance
to change, as the vestige of past behavior. Then, they can resist to change.
Habits can be good (mentally efficient, saving on deliberation efforts).
Habits can also be bad \ (addictions, behavioral traps,\ldots).

In Economics agents are perfectly rational and habits are defined as stocks
of past experiences. A current habit is modelized as a stock of past
behaviors which determines the present preference of the agent with respect
to present consumption. In standard models of addictions, see (Becker,
1988), and habit formation, see (Abel, 1990, Carroll, 2000), preferences
have the given current numerical representation $U_{n}=U(c_{n},h_{n})$,
where the current state $h_{n}$ represents a stock of habits, $c_{n}$ stands
for current consumption, and $n$ indexes time. The habit persistence
hypothesis implies that instantaneous utility does not only depend on
current consumption, but also on a stock of habits, $h_{n}$.

\textbf{Routine's formation in Management Sciences.} In
this discipline\textbf{\ }routines are defined at the organizational level,
as collective patterns of interactions. An enormous literature considers
routines as organizational habits in the context of the stability and change
dynamics of organizations. The excellent survey is (Becker, 2004), which
lists the main points which characterize routines as: patterns of
interactions, collective activities, mindlessness vs effortful
accomplishments, processes (ways of doing, scripts), context dependent
(embeddedness and specificity), path dependent, and triggered by related
actors and external cues. Routines have several effects. They favor
coordination, control, truce and stability. They also economize on cognitive
resources, store knowledge, and reduce uncertainty.

\paragraph{Stability issues in VR and N person second order games}

Although our proximal algorithm paper considers only an agent and not a game
situation, the VR approach includes the examination of variational games
which follow, using reduced formulations, the VR list of nine principles
given above (Attouch et al., \ 2007, Attouch et al., 2008, Flores-Baz%
\'{a}n, Luc, Soubeyran, 2012, Flam et al., 2012, Cruz Neto et al.,
2013). Here, the nine VR principles will not be repeated. However (to save
space, this is left to the reader), to allow a possible more complete
comparison of HD and VR stability issues (possible convergence to some
stable final situation), let us summarize the content of N persons second
order games (Yu, Larbani, 2009, Larbani,Yu, 2009, 2011, 2012). They
incorporate human psychology in formulating games as people play them, using
the habitual domain theory and the Markov chain theory. A Markov chain
modelizes the evolution of the states of mind of players over time as
transition probabilities over them. States of mind determine the outcomes of
the so called two or $N$-person second-order game. The final issues are not
Nash equilibria, but focal mind profiles, which are desirable to reach and
globally stable solutions of the game, and win win profiles, which are focal
and absorbing profiles, while Nash equilibria are not. These games can
predict the average number of steps needed for a game to reach a focal or
win-win mind profile where both players declare victory. Given some
hypothesis, the "possibility theorem" states that it is always possible to
reach a win-win mind profile, restructuring (reframing) the data of the
game, the set of players, the payoff functions, and the set of strategies of
the initial game. In this reframing context, the HD information input
hypothesis H8 plays a major role. In such games, players are not fully
rational. They follow the rule of the HD theory. They refer to DMCS
problems, where the structure of the game can be restructured (it is
changeable), and information inputs help to reach a win win profile. To
summarize, in a conflict situation, second order players try to reach a
focal profile (which exists) and, once it is reached, they try to make it
stable, as a win win profile.

\section{Variational rationality, changeable payoffs and decision sets, and
the inexact proximal algorithm}

The Variational rationality approach considers the rationality of agents,
when many things change or are changeable, and other things must temporary
stay. Let us summarize our finding. First, the inexact proximal algorithm
(with relative resistance to change) is a reduced form of the variational
rationality model. Second, it is not a repeated optimizing problem in
changeable spaces as an exact proximal algorithm can be. In this section, we
will show that it represents a worthwhile to stay and change dynamic. Then,
it is an adaptive and satisficing course pursue problem, with changeable
payoffs, goals, and decision spaces.

\paragraph{Exact proximal algorithms as repeated optimization problems with
variable payoffs and changeable spaces}

Let us consider, first, exact proximal algorithms which are benchmark cases
of their inexact versions. They are not satisficing models of human
behaviors. However, they can be considered as repeated optimization problems
with variable payoffs and changeable decision sets (as worthwhile to change
sets, see the Lemma). The fact that they consider variable payoffs has been
shown above.

Let $X=\mathbb{R}^{n}$ be an action space, $f:X\rightarrow \mathbb{R}\cup
\{+\infty \}$ a proper, lower semicontinuous function bounded from below and
consider the following problems.

\begin{itemize}
\item \textbf{The} \textbf{fixed payoff and decision set optimization
problem (1)}: 
\[
\mbox{PROBLEM 1}:\qquad \inf \left\{ f(y),\;y\in X\right\} . 
\]%
This problem is the substantive (global) rationality minimization problem. 

\item \textbf{The \ fixed payoff and fixed decision set exact proximal
algorithm (2)}%
\[
\mbox{PROBLEM 2}:\qquad \inf \left\{ f(y)+\lambda _{k}\Gamma \left[
q(x^{k},y)\right] ,\;y\in X\right\} , 
\]%
where $x^{k}\in X$ and $\lambda _{k}\in \mathbb{R}_{++}$ are given for each $%
k\in \mathbb{N}$, $q:X\times X\rightarrow \mathbb{R}_{+}$ is a
quasi-distance and $\Gamma :\mathbb{R}\rightarrow \mathbb{R}$ represent the
relative resistance to change. This problem is the exact version of our
inexact proximal problem with relative resistance to change. It is a
repeated optimization problem.

\item \textbf{The variable payoffs and fixed decision set problem (2'):} 
\[
\mbox{PROBLEM 2'}:\qquad \inf \left\{ f_{E^{k}}(y)+\eta _{k}\Gamma \left[
q(x^{k},y)\right] ,\;y\in X\right\} . 
\]

where the variable payoff function is $%
f_{E^{k}}(y)=f_{E^{k}}(y)=v(E^{k})f(y) $

Equality $\lambda _{k}=\eta _{k}/v(E^{k})$ shows that this problem is
equivalent to Problem 2. It represents a variable and experience dependent
payoff with fixed decision set problem. It allows to define a course pursuit
problem with variable preferences.

\item \textbf{Variable payoffs and variable decision set problems. } 
\[
\mbox{PROBLEM 3}:\qquad \inf \left\{ f(y)+\lambda _{k}\Gamma \left[
q(x^{k},y)\right] ,\;y\in W_{\lambda _{k}}(x^{k})\right\} , 
\]%
where $W_{\lambda _{k}}(x^{k})=\left\{ y\in X:f(x^{k})-f(y)\geq \lambda
_{k}\Gamma \left[ q(x^{k},y)\right] \right\} $, \qquad k=0,1,\ldots .
\end{itemize}


\begin{lemma}
\emph{PROBLEM 1} and \emph{PROBLEM 2} are equivalent, i.e., 
\[
\emph{argmin}_{y\in X}\left\{ f(y)+\lambda _{k}\Gamma\left[q(x^{k},y)\right]%
\right\}=\emph{argmin}_{y\in W_{\lambda_{k}}(x^{k})}\left\{ f(y)+\lambda
_{k}\Gamma\left[q(x^{k},y)\right]\right\}. 
\]
\end{lemma}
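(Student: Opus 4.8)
The plan is to abbreviate $g(y) := f(y) + \lambda_k \Gamma[q(x^k,y)]$ and $W := W_{\lambda_k}(x^k)$, and to show that the minimizers of $g$ over $X$ coincide with the minimizers of $g$ over $W$. The crucial observation is that $W$ is defined precisely so that leaving it is never profitable: rewriting the defining inequality $f(x^k) - f(y) \geq \lambda_k \Gamma[q(x^k,y)]$ as $g(y) \leq f(x^k)$ shows that $W$ is nothing but the sublevel set $\{\, y \in X : g(y) \leq f(x^k) \,\}$.

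First I would record that $x^k \in W$. Since $q$ is a quasi-distance we have $q(x^k,x^k) = 0$, and under the normalization $\Gamma[0] = 0$ of the resistance-to-change function this gives $g(x^k) = f(x^k)$; hence $x^k$ satisfies the defining inequality with equality, $W$ is nonempty, and $\inf_X g \leq f(x^k)$.

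The heart of the argument is the inclusion $\mathrm{argmin}_X\, g \subseteq W$. Indeed, if $y^*$ minimizes $g$ over $X$ then $g(y^*) \leq g(x^k) = f(x^k)$, which is exactly the condition $y^* \in W$. Because $W \subseteq X$ we have $\inf_X g \leq \inf_W g$; but $y^* \in W$ now attains $\inf_X g$, so $\inf_W g \leq g(y^*) = \inf_X g$, the two infima coincide, and $y^*$ is a minimizer over $W$ as well. Conversely, any $y^* \in \mathrm{argmin}_W\, g$ must attain $\inf_X g$: were there some $z \in X$ with $g(z) < g(y^*) \leq f(x^k)$, then $z$ would itself lie in $W$ and beat $y^*$ there, a contradiction. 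This identifies the two argmin sets; the same two-sided comparison also settles the degenerate case in which the infimum is not attained, where both sets are simply empty.

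The argument is elementary once the sublevel-set reformulation of $W$ is in hand, so I do not expect a serious analytic obstacle: lower semicontinuity, boundedness from below, and the finer quasi-distance axioms play no role in the equivalence of the argmin sets. The only point requiring genuine care is the normalization $\Gamma[0] = 0$ (or at least $\Gamma[0] \leq 0$), which is what guarantees $x^k \in W$ and hence the upper bound $g(x^k) = f(x^k)$ that drives the whole proof; without it $W$ could be empty while $\mathrm{argmin}_X\, g$ is not, and the identity would break down.
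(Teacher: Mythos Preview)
Your argument is correct and follows essentially the same route as the paper: both proofs use $\Gamma[q(x^k,x^k)]=0$ to place the global minimizer in $W_{\lambda_k}(x^k)$ and then compare the two infima via $W_{\lambda_k}(x^k)\subseteq X$. Your sublevel-set reformulation $W=\{y:g(y)\le f(x^k)\}$ and your explicit treatment of the non-attained case make the logic a bit more transparent than the paper's version, but the underlying idea is identical.
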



\begin{proof}
Take $x_{2}^{k}\in \mbox{argmin}_{y\in X}\left\{ f(y)+\lambda _{k}\Gamma%
\left[q(x^{k},y)\right]\right\}$. Taking into account that $\Gamma\left[%
q(x^{k},x^{k})\right]=0$, from the definition of $W_{\lambda _{k}}(x^{k})$,
it follows immediately that $x_{2}^{k}\in W_{\lambda _{k}}(x^{k})$. Now,
take 
\[
x_{3}^{k}\in \arg \min \left\{ f(y)+\lambda_{k}\Gamma\left[q(x^{k},y)\right]%
, y\in W_{\lambda _{k}}(x^{k})\right\}. 
\]
It is easy to see that: 
\[
f(x_{3}^{k})+\lambda _{k}\Gamma\left[q(x^{k},x_{3})\right]^{k})\leq
f(x_{2}^{k})+\lambda _{k}\Gamma\left[q(x^{k},x_{2}^{k})\right]. 
\]
On the other hand, since $W_{\lambda _{k}}(x^{k})\subset X$ and $%
x_{3}^{k}\in W_{\lambda _{k}}(x^{k})$, definition of $x_{2}^{k}$ implies: 
\[
f(x_{3}^{k})+\lambda _{k}\Gamma\left[q(x^{k},x_{3}^{k})\right]\geq
f(x_{2}^{k})+\lambda _{k}\Gamma\left[q(x^{k},x_{2}^{k})\right], 
\]
%
%
%
%
%
%
and the result follows.
\end{proof}

\begin{remark}
This lemma show that our inexact proximal algorithm is a changeable payoff
and decision set process which belongs to the class of \textquotedblleft
Decision Making and Satisficing (not necessarily Optimizing) Problems in
Changeable Spaces", where the changeable payoff is $P_{\lambda
_{k}}(x^{k},y)=f(y)+\lambda _{k}\Gamma \left[ q(x^{k},y)\right] ,$ and the
changeable decision set is the current worthwhile to change set $W_{\lambda
_{k}}(x^{k})$.
\end{remark}

\paragraph{Inexact proximal algorithms as adaptive satisficing dynamics with
variable payoffs and changeable spaces}

If the agent follows an inexact proximal algorithm, he will choose to
perform, each step $k$, a worthwhile change $y\in W_{e_{k},\xi _{k}}(x^{k})$
where $\xi _{k}=\lambda _{k}\mu _{k}>0,$ 
\[
f(x^{k})-f(y)\geq \lambda _{k}\mu _{k}\Gamma \left[ q(x^{k},y)\right]
,\qquad 0<\mu _{k}\leq 1. 
\]%
This inexact proximal algorithm (with relative resistance to change)
introduces a lot of simplifications, as a reduced form of the variational
rationality model. Among others, it identifies actions $x\in X$ \ to
activities ${\huge x}=(x,\left[ x\right] )$ where $\left[ x\right] \in \left[
X\right] $ refers to a given capability to do an action $x$. It supposes a
strictly increasing and invertible pleasure (utility) function $U_{e}\left[
A_{e}\right] =A_{e}$, and defines a relative resistance to change function $%
\Gamma (I_{e})=U^{-1}\left[ D_{e}\left[ I_{e}\right] \right] $ where pain,
i.e, the disutility of inconvenients to change is $D_{e}\left[ I_{e}\right]
=I_{e}.$ It considers separable experience dependent unsatisfied need
functions $f_{E^{k}}(y)=v(E^{k})f(y)$ or separable payoff (profit) functions 
$g_{E^{k}}(y)=v(E^{k})g(y),$ where $E^{k}=(x^{1},x^{2},...,x^{k})$ is the
history of past actions, and the influence of experience is modelized via
the coefficient $v(E^{k})>0.$ It considers infimum costs to be able to
change $C(x,y)=\inf \left\{ C(\left[ x\right] ,\left[ y\right] ,\omega _{%
\left[ x\right] ,\left[ y\right] }),\omega _{\left[ x\right] ,\left[ y\right]
}\in \Omega (\left[ x\right] ,\left[ y\right] )\right\} ,$ among all finite
costs to be able to change $C(\left[ x\right] ,\left[ y\right] ,\omega _{%
\left[ x\right] ,\left[ y\right] })<+\infty $, following a path of change, $%
\omega _{\left[ x\right] ,\left[ y\right] }\in \Omega (\left[ x\right] ,%
\left[ y\right] $, from a given capability $\left[ x\right] $ to do an
action $x$ to a given capability $\left[ y\right] $ to do a new action $y$.
This inexact proximal algorithm allows to define worthwhile changes $%
x^{k}\curvearrowright $ $y$ as $g_{E^{k}}(y)-g_{E^{k}}(x^{k})\geq $ $\eta
_{k}\Gamma \left[ q(x^{k},y)\right] $, where $\eta _{k}>0$ is an adaptive
worthwhile to change satisficing ratio, which can be changed from period $k$
to period $k+1$. If we note $\lambda _{k}=\eta _{k}/v(E^{k}),$then, a
worthwhile change $x^{k}\curvearrowright $ $y$ is defined as $%
g(y)-g(x^{k})\geq $ $\lambda _{k}\Gamma \left[ q(x^{k},y)\right] $. In term
of separable experience dependent unsatisfied need functions $%
f_{E^{k}}(y)=v(E^{k})f(y),$ a worthwhile change $x^{k}\curvearrowright $ $%
y\in W_{e_{k},\xi _{k}}(x^{k})$ is such that $f(x^{k})-f(y)\geq $ $\lambda
_{k}\Gamma \left[ q(x^{k},y)\right] .$

The topic of our paper is not exact proximal algorithms, but inexact ones. \
Inexact proximal algorithms examined in this paper represent adaptive
satisficing dynamics (dealing with changeable satisficing levels), variable
and experience dependent preferences and changeable decision sets, which
belong to the class of decision making with changeable spaces and changeable
goals problems, noted DMCSCG problems. Let us show this important point,
which helps the comparison with the Habitual domain theory and DMCS decision
making problems with changeable spaces (Larbani, Yu, 2012). Our VR point of
view is the following. An inexact proximal algorithm is a specific instance
of a VR worthwhile to stay and change dynamics $x^{k+1}\in W_{e_{k},\xi
_{k+1}}(x^{k}),k\in N$ . This dynamic is both satisficing and considers
changeable goals and decision sets,

i) This dynamic is satisficing because the worthwhile to change condition
generalizes the Simon (1955) definition in a dynamical context, balancing
satisfactions to change with sacrifices to change. The moving goal is, each
period, the chosen worthwhile to change satisficing level $\xi _{k+1}>0.$

ii) This dynamic is adaptive, and considers changeable decision sets $%
W_{e_{k},\xi _{k+1}}(x^{k})$. Each period, the agent can chooses how much
changes must be worthwhile (the size of $\xi _{k+1}$) \ to accept to change.
Then, the worthwhile to change set is chosen, at each period.

\paragraph{Local inexact proximal algorithms}

For each $k\in \mathbb{N}$ fixed, let $X^{k}\subset X$, $x^{k}\in X$, $%
\lambda_{k}\in \mathbb{R}_{++}$ and consider the following problem: 
\[
\mbox{PROBLEM 4}:\qquad \inf\left\{ f(y)+\lambda _{k}\Gamma\left[q(x^{k},y)%
\right],\; y\in X^{k}\right\}. 
\]
This a variable preference proximal problem with a changeable feasibility
space $X^{k}\subset X$. In Attouch, Soubeyran (2010) it is a changing ball $%
X^{k}=B_{r}(x^{k})$ of constant radius $r>0$. We can define the indicator
function $I_{X^{k}}$ and consider the changeable payoffs and decision spaces
proximal problem, 
\[
\inf \left\{ f(y)+I_{X^{k}}(y)+\lambda _{k}\Gamma\left[q(x^{k},y)\right],\;
y\in X\right\}. 
\]
Bento et al. (2013) have also examined an exact local search
multiobjective proximal problem, where $X^{k}$ is a lower countour set of
the multi-objective function. 

Now, let us consider \textbf{``fixed payoff and variable decision set
problems"}, namely, 
\[
\mbox{PROBLEM 5}:\qquad \inf \left\{ f(y), y\in W_{\lambda
_{k}}(x^{k})\right\}. 
\]
%
Let us assume that $\{x_{5}^{k}\}$ is a generated sequence from the
iterative process 
\[
x_{5}^{k}\in \mbox{argmin}\left\{ f(y), y\in W_{\lambda_{k}}(x^{k})\right\}. 
\]
Since $x_{5}^{k}\in W_{\lambda _{k}}(x^{k})$, by definition, the agent
follows, each step, a worthwhile change.

\begin{lemma}
Let $\{x_{2}^{k}\}$ be a generated sequence from the iterative process 
\[
x_{2}^{k}\in \emph{argmin}_{y\in X}\left\{ f(y)+\lambda _{k}\Gamma\left[%
q(x^{k},y)\right]\right\}, 
\]
such that $\{f(x_{2}^{k})\}$ converges to $\underline{f}:=\inf \left\{
f(y),y\in X\right\}$. Then the sequence $\{x_{5}^{k}\}$ is a minimizing
sequence for \emph{PROBLEM~1}.
\end{lemma}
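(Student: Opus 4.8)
The plan is to sandwich $f(x_5^k)$ between $\underline{f}$ and $f(x_2^k)$ for every $k$, and then invoke the squeeze theorem using the hypothesis $f(x_2^k)\to\underline{f}$.

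First I would record that the proximal iterate $x_2^k$ is itself a worthwhile change from $x^k$, that is, $x_2^k\in W_{\lambda_k}(x^k)$. This is the same observation already exploited in the proof of the first Lemma: since $q$ is a quasi-distance, $q(x^k,x^k)=0$ and hence $\Gamma[q(x^k,x^k)]=0$; testing the minimality of $x_2^k$ against the feasible point $y=x^k$ then gives
\[
f(x_2^k)+\lambda_k\Gamma[q(x^k,x_2^k)]\le f(x^k)+\lambda_k\Gamma[q(x^k,x^k)]=f(x^k),
\]
which rearranges to $f(x^k)-f(x_2^k)\ge\lambda_k\Gamma[q(x^k,x_2^k)]$, i.e.\ $x_2^k\in W_{\lambda_k}(x^k)$.

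Next, because $x_5^k$ minimizes $f$ over $W_{\lambda_k}(x^k)$ while $x_2^k$ is a feasible point of the very same set, I immediately get $f(x_5^k)\le f(x_2^k)$. On the other hand $x_5^k\in W_{\lambda_k}(x^k)\subset X$, so $f(x_5^k)\ge\underline{f}$. Combining the two bounds yields
\[
\underline{f}\le f(x_5^k)\le f(x_2^k),
\]
valid for every $k$. Letting $k\to\infty$ and using $f(x_2^k)\to\underline{f}$, the squeeze theorem forces $f(x_5^k)\to\underline{f}$, which is exactly the assertion that $\{x_5^k\}$ is a minimizing sequence for PROBLEM~1.

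I do not expect a genuine obstacle here: once the membership $x_2^k\in W_{\lambda_k}(x^k)$ is in hand, the comparison is a single line. The only points needing care are the normalization $\Gamma[q(x^k,x^k)]=0$ (the same one already invoked in the first Lemma) and the tacit nonemptiness of the two argmin sets, which guarantees that the iterates $x_2^k$ and $x_5^k$ are well defined at each step.
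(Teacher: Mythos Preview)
Your argument is correct and matches the paper's own proof essentially line for line: the paper also invokes the first Lemma to obtain $x_2^k\in W_{\lambda_k}(x^k)$, deduces $f(x_5^k)\le f(x_2^k)$ by feasibility, bounds below by $\underline{f}$, and concludes by squeezing. There is no meaningful difference in approach.
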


\begin{proof}
Take $k\in \mathbb{N}$ arbitrary. Note that $x^{k}_{2}\in W_{\lambda
_{k}}(x^{k})$ (see proof of \mbox{Lemma 2}). It is easy to see that $%
f(x_{5}^{k})\leq f(x_{2}^{k})$, i.e., each solution of \mbox{PROBLEM 2}
gives an estimation-majoration $f(x_{2}^{k})$ to the minimal payoff $%
f(x_{5}^{k})$ of \mbox{PROBLEM 5}. Now, from the definition of $\underline{f}
$, we have 
\begin{equation}  \label{eq:2013}
\underline{f}\leq f(x_{5}^{k})\leq f(x_{2}^{k}).
\end{equation}
Therefore, the desired result follows from (\ref{eq:2013}) together the fact
of $\{ x_{2}^{k}\}$ be a minimizing sequence for \mbox{PROBLEM 1}.
\end{proof}

\paragraph{Mathematical stability issues}

\begin{itemize}
\item[1)] \textbf{Stability of variational traps. }Let $\lambda _{k}=\eta
_{k}/v(E^{k})$ be a stage $k$ proximal ratio, where $\eta _{k}>0$ is a stage 
$k$ worthwhile to change ratio, and $v(E^{k})>0$ is the experience rate of
influence at stage $k.$ Let $\lambda _{\ast }=\eta _{\ast }/v(E^{\ast })$ be
the limit proximal ratio. It is easy to show that if $x^{\ast }\in X$ is a
variational trap, given the proximal ratio $\lambda _{\ast }$, then, $%
x^{\ast }\in X$ is also a variational trap, for any higher proximal ratio $%
\lambda >\lambda _{\ast }.$ This comes from the implication: $W_{\lambda
_{\ast }}(x^{\ast })=\left\{ x^{\ast }\right\} $ implies $W_{\lambda
}(x^{\ast })\subset W_{\lambda _{\ast }}(x^{\ast })$ for $\lambda >\lambda
_{\ast }.$ Then, $W_{\lambda }(x^{\ast })=W_{\lambda _{\ast }}(x^{\ast
})=\left\{ x^{\ast }\right\} $ for $\lambda >\lambda _{\ast }.$ A higher
proximal ratio $\lambda =\eta /v(E)>\lambda _{\ast }=\eta _{\ast }/v(E^{\ast
})$ means a higher worthwhile to change ratio $\eta >\eta _{\ast }$ or (and)
a lower experience rate of influence $v(E)<v(E^{\ast }).$ It can also mean
higher costs to be able to change parameters and, each period, a longuer
length of the exploitation phase, or a shorter length of the exploration
phase (Soubeyran, 2009, 2010). For example, stability issues with respect to
adaptive parameters like $\lambda _{k}\geq \lambda _{\infty }>0$, where $%
\lambda _{k}$ represents a changing preference, resistance to change, costs
to be able to change, and length of \ the exploitation period relative to
the exploration period, will be examined elsewhere.

\item[ 2)] \textbf{Stability of worthwhile to change trajectories. }For an
exact quadratic proximal problem, 
\[
\inf \left\{ f(y)+\lambda \Gamma \left[q(x,y)\right],y\in X\right\},\qquad
\Gamma\left[q(x,y)\right]=\left\Vert y-x\right\Vert ^{2}, 
\]
this is a well known result. The proximal mapping 
\[
X\ni y\longmapsto \varphi _{\lambda }(x):=\arg \min \left\{ f(y)+\lambda
\Gamma\left[q(x,y)\right],\; y\in X\right\}, 
\]
is well defined (recall that $f$ was assumed be bounded from below) and, if $%
f$ is convex, it is non expansive, i.e., 
\[
\left\vert \varphi _{\lambda }(x^{\prime })-\varphi _{\lambda
}(x)\right\vert \leq \left\Vert x^{\prime }-x\right\Vert,\qquad x,x^{\prime
}\in X, 
\]
see, for example, (Attouch, Soubeyran, 2010). For an inexact proximal
algorithm with a non quadratic regularization term and a non convex $f$,
this is an open difficult question.
\end{itemize}

\section{Habit's and Routine's Formation: Inexact Proximal Processes with
Weak Resistance to Change}

In this section (see the Arxiv version of our paper, Bento, Soubeyran,
2013), we detail how our inexact proximal algorithm modelizes habit/routine
formation and break, using the VR resistance to change modelization.

\paragraph{Comparing worthwhile changes and stays processes, inexact
proximal algorithms and habituation-routinization processes.}

As an application we will consider habit/routine formation as an inexact
proximal algorithm in the context of weak resistance to change. Because of
its strongly interdisciplinary aspect (Mathematics, Psychology, Economics,
Management), to be carefully justified, this application needs several
steps. This, because we need to compare three differents processes. First, a
general worthwhile change and stay process. Then, as two specific instances,
an inexact proximal algorithm and an habituation/routinization process. The
comparison must consider three aspects; a dynamical system (which one?),
which converges (how?), to an end point (which one?).

\begin{itemize}
\item \textbf{Step}.\textbf{1}. At the mathematical level, an exact proximal
algorithm represents a dynamical system, which, each step, minimizes a
proximal payoff $f+\lambda _{k}\left[ q(x^{k},\cdot )\right] ^{2}$ over the
whole space $X$. The perturbation term is $\lambda _{k}\left[ q(x^{k},y)%
\right] ^{2},\lambda _{k}>0$, $y\in X$, while the payoff function is $f$. An
inexact proximal algorithm represents a dynamical process which, each step,
uses a descent condition and a temporary stopping rule. In both cases, the
problem is to give conditions under which this process converges to a limit
point. Then, an exact or inexact proximal algorithm considers three points:

\begin{itemize}
\item[i)] a dynamical process. It represents the proximal sub-problem which
can be the minimization of the current proximal payoff (for an exact
proximal algorithm), or a descent condition, i.e., a sufficient decrease of
the proximal payoff (in the inexact case);

\item[ii)] the existence of some end points, which can be, or not, a
critical point, a local, or global minimum of $f$;

\item[iii)] and the convergence of the process towards an end point. This
convergence can be linear, quadratic\ldots.
\end{itemize}

\item \textbf{Step.2}. At the behavioral level, in the context of his
\textquotedblleft Variational rationality approach", (Soubeyran, 2009, 2010)
has examined \textquotedblleft worthwhile change and stay" processes. Such
dynamical processes refer to successions of \ temporary worthwhile changes
and worthwhile stays. The end points are variational traps. The convergence
of the process materializes in small steps, whose length goes to zero. Let
us remind that, in this behavioral context, end points represent traps
(reachable, i.e, more or less easy to reach, but difficult to leave).
Worthwhile changes balance, each step, motivation and resistance to change
forces. The motivation force is the utility $U\left[ A(x,y)\right] $ of the
advantages to change function $A(x,y)=f(x)-f(y)$ where $f$ represents an
unsatisfied need to be minimized. The resistance to change force represents
the disutility $D\left[ I(x,y)\right] $ of the inconvenients to become able
to change $I(x,y)=q(x,y)$, where $q(x,y)$ is a quasi distance$.$ In the
context of this paper, the variational approach considers the relative
resistance to change or aversion to change function $\Gamma \left[ q(x,y)%
\right] =U^{-1}\left[ D\left[ q(x,y)\right] \right] $. This shows that the
perturbation term of an exact or inexact proximal algorithm is a specific
instance of a relative resistance to change function $\Gamma \left[ q(x,y)%
\right] $. (Moreno et al. 2011) have considered the specific quadratic case
of weak resistance to change, where $\Gamma \left[ q(x,y)\right] =q(x,y)^{2}$%
. Then, at the behavioral level, two mains concepts, among others, drive a
\textquotedblleft worthwhile changes and stays" process: i) the unsatisfied
need function $f$ (which materializes the motivation to change) and, ii)
inertia (the relative resistance to change function $\Gamma \left[ q(x,y)%
\right] $)$.$

\item \textbf{Step}.\textbf{3}. Habit formation/routinization processes. A
very short survey has been given in the previous section Behavioral
stability issues: \textquotedblleft how habits and routines form and break".
These processes consider, i) a repetitive process where an action is
repeated again and again in the same recurrent context, ii) a final stage
where this action becomes a permanent habit, iii) the convergent process
which describes a slow learning habituation process where this action, being
repeated again and again in the same recurrent context becomes gradually
automatized.

\item \textbf{Step.4}. Then, it becomes clear that habituation/routinization
processes are specific instances of worthwhile change and stay processes.
Unsatisfied needs and inertia play a major role.

\item \textbf{Step.5.} In our paper we have shown how both\ exact and
inexact proximal algorithms are specific instances of \textquotedblleft
worthwhile change and stay" processes, because: i) minimization of the
current proximal payoff, descent conditions and current stopping rules are
special cases of worthwhile changes and marginal worthwhile stays, ii)
critical points, local and global minimum are specific representations of
variational traps, iii) convergence of the proximal algorithm shows how
proximal worthwhile changes converge, depending of the shape of the payoff
function (which can be convex, lower semicontinuous, or which can satisfy a
Kurdyka- Lojasiewicz inequality,\ldots) and of the shape of the
perturbation term (linear, convex,\ldots with respect to distance or quasi
distance).
\end{itemize}

\paragraph{ Why resistance to change matters much.}

The benchmark case of lower semicontinuous unsatisfied need functions $f$
and strong resistance to change functions $\Gamma \left[ q(x,y)\right] $
(where costs to be able to change are higher than a quasi distance) have
been examined by (Soubeyran, 2009, 2010) who considered worthwhile change
and stay processes. It has been shown that when a worthwhile change and stay
process converges to a variational trap, this variational formulation offers
a model of \ habit/routine formation which modelizes a permanent
habit/routine as the end point of a convergent path of worthwhile change and
temporary habits, where, for a moment, there is no way to do any other
worthwhile change, except repetitions.

The opposite case of weak resistance to change was left open. In the context
of an exact proximal algorithm, Moreno et al.(2011) have examined a specific
case of weak resistance to change, namely the quadratic case $\Gamma \left[
q(x,y)\right] =q(x,y)^{2}$. However, exact proximal algorithms represent a
very specific case of worthwhile changes, where, each step, the descent
condition is optimal. This means that, each step, the process minimizes the
proximal payoff $f+\lambda _{k}\Gamma \left[ q(x^{k},\cdot )\right] $ on the
whole space $X$. Then, such optimizing worthwhile changes are not step by
step economizing behaviors because they require to explore, each step, the
whole state space, again and again. The present paper considers the
generalized weak resistance to change case in the context of an inexact
proximal algorithm instead of an exact one. In both papers the unsatisfied
need function $f$ satisfies a Kurdyka- Lojasiewicz inequality. How the
strength of resistance to change impacts the speed of habit's/routine's
formation is, as an application, the topic of the related paper Bento,
Soubeyran (2014).

To summarize, we have compared an inexact generalized proximal algorithm
with a worthwhile change and stay process with respect to three aspects: A)
as a dynamical system, B) with an end point, C) which converges to that end
point. To end this paper, it remains to compare an inexact generalized
proximal algorithm with an habituation and routinization process, as it is
described in Psychology and Management Sciences, using the same three
criteria.

\paragraph{Inexact generalized proximal algorithm and habituation/
routinization process are dynamical systems.}

\begin{itemize}
\item \textbf{Habituation/routinization process}. They represents the
repetition of an action of a given kind (some activity related to a given
goal), in order to satisfy a recurrent unsatisfied need in a stable context.
The repetition concerns the action and what becomes more and more the same
is \textquotedblleft the way of doing it" (the script). This repetition
follows a succession of worthwhile changes and stays. An inexact proximal
algorithm represents a step by step processes, a succession of moves in
order to "decrease enough" some proximal payoff function. Usually, both
dynamical processes are unable to reach the goal in one step. Each step, the
level of satisfaction of the recurrent need increases, but some
unsatisfaction remains.

An habituation process is driven by two balancing forces : a motivation to
change function $M\left[ A(x,y)\right] $ (an habit/routine must serve us),
and a resistance to change function $D\left[ I(x,y)\right] $,
(habits/routines are hard to form and hard to break because learning and
unlearning are costly). An inexact proximal algorithm is driven by the two
terms $f(y)$ and $\Gamma \left[ q(x,y)\right] $ of its proximal payoff $%
f(y)+\lambda _{k}\Gamma \left[ q(x,y)\right] $ where $q(x,y)=C(x,y)$. This
balance describes the goal-habit interface.

The rationality of an habituation process is to improve by repetition the
way of doing a similar action in the same context. The agent improves with
costs to change. He satisfices, doing worthwhile changes, without exploring
too much each step (local consideration and exploration; see Soubeyran,
(2009, 2010) for this important aspect). An inexact proximal algorithm
follows, each step, some descent condition and marginal stopping rule,
without optimizing each step.

The influence of the past differs from one process to the other. For an
habituation process the impact of the past can be very important (the past
sequence matters much). For an inexact proximal algorithm it is as if only
the last action matters. The influence of the past is minimal (it is as if
the agent has a short memory). The influence of the future seems identical
in both cases: myopia seems to be the rule. Only the next future action
matters. Agent's behavior driven by habits/routines are not forward looking.
For more forward looking worthwhile to change behavior; see Soubeyran,
(2009, 2010).

\textbf{Convergence: see the last paragraph (``why resistance to change
matters much")}

\textbf{End points. }Our inexact proximal algorithm converges to a critical
point, which is not an end point, unless it can be shown that a critical
point is a variational trap (as this is done in our paper). A variational
trap is worthwhile to reach and not worthwhile to leave. An habituation
process ends in a permanent habit/routine which is hard to form and hard to
break. It represents the vestige of a past repeated behavior.
\end{itemize}

\section{Making the assumptions of the proximal algorithms clear in
behavioral terms}

We have to show how, in Behavioral Sciences, our three proximal algorithms
modelize, at least in a reduced form, how habits form and break.

A first step has been given before, in Section 7. This have been done in
five steps. We have shown that inexact proximal algorithms and habitual
processes are dynamical systems, we have compared their end points, critical
points or variational traps, we have examined how the strength of resistance
to change influences their abilities to converge, and we have linked
resistance to change to loss aversion, a famous behavioral concept.

A second step is to detail the behavioral content of all the hypothesis
which drive our three proximal algorithms. There are general behavioral
hypothesis which are common to the three proximal algorithms, and specific
hypothesis relative to each of them. More explicitly, the three algorithms
suppose that costs to be able to change are quasi distances. They consider,
each step, worthwhile and marginally worthwhile changes (descent
conditions), and suppose weak resistance to change, as well as a marginal
stopping rule. The first algorithm is targeted to converge to a critical
point. The second and third algorithms are targeted to converge to a
variational trap.

\paragraph{General behavioral hypothesis}

\begin{itemize}
\item H.1. Costs to be able to change $C$ are modelized as quasi-distances.
For an agent, costs to be able to change $C(x,y)=q(x,y)$ refer to the
infimum of the costs to be able to change his capabilities, from having the
capability to do an action $x,$ to the capability to do an action $y$. Then $%
C(x,x)=0$ means that if the agent is able to do an action $x,$he is able to
do this action at no cost. The condition $C(x,y)=0\Longleftrightarrow y=x$
means that if the agent is able to move at no cost, then, he can only repeat
the same action, if he is able to do it . The triangle inequality $%
C(x,z)\leq C(x,y)+C(y,z)$ for all $x,y,z\in X$ means that, for an agent, the
infimum cost to change his capabilities from the initial capability to do an
action $x$ to the final capability to do an action $z$ is lower than the
infimum cost to change his capabilities from the initial capability to do
action $x$ to the intermediate capability to do an action $y$ and the
infimum cost to change his capabilities from the intermediate capability to
do action $y$ to the final capability to do action $z$, because the way to
change successively from an initial capability to an intermediate capability
and from this intermediate capability to a final capability is an indirect
way to change from the initial to the final capability.

\item H.2. Unsatisfied needs $f:\mathbb{R}^{n}\rightarrow \mathbb{R}\cup
\{+\infty \}$ is a proper lower semicontinuous function. This is a
regularity assumption which supposes no free lunch. The agent cannot reduce
his unsatisfied need in a given small amount without changing enough his
action (no jump downward are allowed).

\item H.3. Advantages to change $A(x,y)=f(x)-f(y)${\huge \ }refer to
separable advantages to change functions, linked, when positive, to a
decrease in unsatisfied needs.

\item H.4. The ratio $\xi _{k}>0$ modelizes how much a change can be
worthwhile (the adaptive satisficing case). This ratio can change from
period to period. This means that the agent can adapt with delay or not,
each step, his degree of satisficing. In this paper he adapts with one
period delay (writing $\xi _{k}>0$ instead of $\xi _{k+1}>0,$ to fit with a
proximal formulation).

\item H.5. The utility function $U[\cdot ]$ is invertible with $U[0]=0$.
This is the case for a strictly increasing utility function, relative to
advantages to change (the usual case).

\item Condition (4) supposes that $\Gamma (C)=U^{-1}\left[ D%
\left[ C\right] \right] $ is twice differentiable with respect to $C.$ It is
a regularity condition, relative to the relative resistance to change
function.

\item Condition (7)  means that the relative resistance to
change function is \textquotedblleft flat enough in the small" (in the "weak
enough resistance to change" case). It supposes that the margnal relative
resistance to change must be \textquotedblleft lower enough" with respect to
the mean relative resistance to change, at least for \textquotedblleft low
enough" costs to be able to change.

\item Assumption 3.1 supposes that costs to be able to change
are high (low) enough iff the old and new actions are different (similar)
enough.
\end{itemize}

\paragraph{Hypothesis relative to Algorithm 4.1}

Algorithm 4.1 supposes three conditions  (10), (11), (12).
Condition (10) imposes worthwhile changes (a sufficient descent assumption)
along the process. Condition (11) defines subgradients for the unsatisfied
needs and costs to be able to change functions. Condition (12) refers, each
step, to a stopping rule, where the norm of the marginal decrease of the
unsatisfied need is lower than the norm of the marginal relative resistance
to change.


The Kurdyka-Lojasiewicz inequality (Definition 4.3)  refers to a
curvature property of the unsatisfied need function, near a critical point.
The unsatisfied need must be lower than some increasing function of the
marginal unsatisfied need, close to a critical point.

Assumption 3.2 supposes that the marginal relative resistance to change
function is lower than a power function, near the origin. It is a curvature
property.

\paragraph{Hypothesis relative to Algorithm (9)}

This algorithm supposes approximate (almost) worthwhile changes,
including, at each step, an error term $\varepsilon_{k}>0$.

\paragraph{Hypothesis relative to Algorithm 4.2}

This algorithm supposes three conditions (13), (14), (15). Condition (13) is
a modified worthwhile to change assumption. Condition (14) defines
subgradients for the unsatisfied needs and costs to be able to change
functions. Condition (15) is the stopping rule condition (12). This
algorithm adopts all the hypothesis of Algorithm 4.1.

\section{Application to the Formation and Break of Habits/Routines}

The {\it Variational rationality (VR) approach}, see Soubeyran (2009, 2010), focus attention on interdisciplinary stability and change dynamics (habits
and routines, creation and innovation, exploration and exploitation,\ldots),
and the self regulation problem, seen as a stop and go course pursuit
between feasible means and desirable ends mixing, in alternation, discrepancy
production (goal setting, goal revision), discrepancy reduction (goal
striving, goal pursuit) and goal disengagement. It rests on two main
concepts, worthwhile temporary stays and changes, variational traps and nine
principles. This (VR) approach allows to recover the main mathematical
variational principles, and in turn, it benefits from almost all variational
algorithms for procedural applications, which all, use some of the main
variational rationality principles.
%
The {\it Habitual domain (HD) theory and (DMCS)
approach} (see Yu, Chen (2010), for a nice presentation) and the (DMOCS) Decision
making and optimization problems in changeable spaces (see Larbani, Yu (2012)) refer to three stability and change problems. They are i) stability
issues, using a system of differential equations, a variant of the famous
pattern formation Cohen-Grossberg model (see Cohen, Grossberg (1983)), ii)
expansion of an initial competency set to be able to solve a given problem,
which requires to acquire a new given competency set, using mathematical
programming methods and graphs, iii) DMCS optimization and game problems,
using Markov chains, with applications to innovation cover-discover problems
(see Yu, Larbani (2009) and Larbani, Yu (2009,2011)).

\subsection*{\textbf{1) Habit/routine formation and break problems}}

The (VR) approach sees habit/routine formation and break as a balance between motivation
and resistance to change, when agents use worthwhile changes; Permanent
habits refer to variational traps, as the end of a succession of worthwhile
changes. These findings fit well with Psychology and Management theories of
habits and routines formation and break. For example, in Psychology, habits
form by repetitions, in a rather stable context, which trigger their
repetition. They become gradually more and more automatized behaviors which
are intentional (goal directed), more or less conscious and controllable,
and economize cognitive resources. In this context agents are bounded
rational. The (HD) approach modelizes habit formation as a balance, each
step, between excitation and inhibition forces, which determine, each time,
the variation of the allocation of attention and effort (propensities),
using, as said before, a variant of the Cohen-Grossberg system of
differential equations (see Cohen, Grossberg (1983)).

In the limit, the allocation of attentions and efforts (propensities)
converge to an allocation which represents a stable habitual domain. The
(DMOCS) approach of games defines limit profiles of mind sets as absorbing
states of a Markow chain. In these two contexts agents are bounded rational.

\subsection*{\textbf{2) Inexact proximal algorithms as repeated
satisficing problems with changeable decision sets}}

We have shown in Section 3 of this paper that
our inexact proximal algorithms refer to variable and changeable decision
sets, payoffs, goals (satisficing worthwhile changes) and preference
processes. This is the case because worthwhile to change sets are changeable decision sets which change, each period, with experience and the choice, each period, of the satisficing worthwhile to change ratio; see the whole Section 2 and the dynamic of worthwhile (hence satisficing) temporary stays and changes. More precisely Inexact generalized proximal algorithms are
specific instance of VR worthwhile stay and change adaptive dynamics. They
consider non transitive variable worthwhile to change preferences, see Section 2, which are
reference dependent preferences with variable reference points (variable
experience dependent utility/disutility functions, variable payoff
functions, variable and non linear resistance to change functions  via, in each case, of the introduction of the separable and variable lambda term). They can
use costs to be able to change which do not satisfy the triangular
inequality; see Bento et al., (2014) and several other references within Bento, Soubeyran (2014). They are inexact and
procedural algorithms on quasi metric spaces. Then, they modelize bounded
rational and more or less myopic agents who bracket difficult decisions in
several steps, contrary to exact proximal algorithms which modelize a
repeated optimization problem, and which are not the topic of our paper.
They generalize the Simon satisficing principle to a dynamical context
(repeated and adaptive satisficing). They are anchored to optimization
processes as benchmark cases. They deal with changeable spaces as changeable worthwhile to change sets and also changeable exploration sets; see
Attouch, Soubeyran, (2011), Bento, Cruz Neto, Soubyeran (2014) and consider
convergence in finite time as a central topic; see Bento, Soubeyran (2014) and Bento et al. (2013). They include psychological aspects (like motivations, cognitions
and inertia) and can easily include emotional aspects.

We point out that the assumptions of inexact proximal
algorithms explicit in behavioral terms have been given in Section 2 of this paper (see the example) and after
each proximal algorithm.



\section{References on the VR, HD and DMCS approaches}

\subsection*{References for the Habitual domain theory}

\noindent 1. Yu, P.L, Chen,Y.C.: Dynamic MCDM, habitual domains and
competence set analysis for effective decision making in changeable spaces.
Chaper 1. In Trends in Multiple Criteria Decision Analysis. Matthias Ehrgott
Jose Rui Figueira Salvatore Greco. Springer  (2010)

\vspace{.5cm}

\noindent 2. Shi, D. S., Yu,  P. L.: Optimal expansion of competency sets with
intermediate skills and compound nodes. Journal of Optimization Theory and
Applications, Vol. 102, No.1, pp. 643-657, 1999.

\vspace{.5cm}

\noindent 3. Yu, P. L., Zhang, D.: A Foundation for Competence Set
Analysis, Mathematical Social Sciences, Vol. 20, pp. 251-299, 1990.

\vspace{.5cm}

\noindent 4. Yu, P. L., Zhang, D.: A Marginal Analysis for Competence
Set Expansion, Journal of Optimization Theory and Applications, Vol. 76, pp.
87-109, 1993.

\vspace{.5cm}

\noindent 5. Li, H. L., Yu, P. L.: Optimal Competence Set Expansion
Using Deduction Graphs, Journal of Optimization Theory and Applications,
Vol. 80, pp. 75-91, 1994.

\vspace{.5cm}

\noindent 6. Shi, D. S.,  Yu,  P. L.: Optimal expansion and design of
competence set with asymmetric acquiring costs. Journal of Optimal Theory
and Applications, Vol. 88, pp. 643--658, 1996.

\vspace{.5cm}

\noindent 7. Li, J. M., Chiang, C. I., Yu, P. L.: Optimal multiple
stage expansion of competence set. European Journal of Operational Research,
Vol. 120(3), pp. 511-524, 2000.

\vspace{.5cm}

\noindent 8. Yu, P. L.: Habitual Domains. Operations Research, Vol. 39 (6),
869--876, 1991.

\noindent 9. Chan, S. J., Yu, P.L.: Stable Habitual Domains: existence and
implications. Journal of Mathematical Analysis and Applications, Vol. 110 (2)
469-482, 1985.

\vspace{.5cm}

\noindent 10. Yu, P. L., Larbani M.: Two-person second-order games, Part 1:
formulation and transition anatomy. Journal of Optimization Theory and
Applications, 141(3), 619-639, 2009.

\vspace{.5cm}

\noindent 11. Larbani, M., Yu, P.L.: Two-person second-order games, Part II:
restructuring operations to reach a win-win profile, Journal of Optimization
Theory and Application, Vol. 141, 641-659, 2009.

\vspace{.5cm}

\noindent 12. Larbani, M., Yu, P.L.: n-Person second-order games: A paradigm
shift in game theory, Journal of Optimization Theory and Application, Vol. 149
(3), 447--473, 2011.

\vspace{.5cm}

\noindent 13. Larbani M., Yu P.L. Decision Making and Optimization in
Changeable Spaces, a New Paradigm, Journal of Optimization Theory and
Application, Vol. 155 (3), 727-761, 2012.

\vspace{.5cm}

\noindent 14. Po L. Yu, Forming Winning Strategies, An Integrated Theory of
Habitual Domains, Springer-Verlag, Berlin, Heidelberg, New York, London,
Paris, Tokyo, 1990 (392 pages).
\vspace{.5cm}

\noindent 15. Po-Lung Yu, Habitual Domains: Freeing Yourself from the Limits on
Your Life, Highwater Editions, Kansas City, July 1995 (224 pages). (also
available in Chinese and Korean translation)
\vspace{.5cm}

\noindent 16. Po-Lung Yu, Habitual Domains and Forming Winning Strategies, NCTU
(National Chiao Tung University) Press, Hsinchu, Taiwan, 2002 (531 pages).
\vspace{.5cm}

\subsection*{References for the Variational rationality approach}

\subsection*{Publications}

\noindent 1) Attouch, H. Soubeyran, A.: (2006)  Inertia and reactivity in
decision making as cognitive variational inequalities.
Journal of Convex Analysis, 13 (2), 207--224

\vspace{.5cm}

\noindent 2) Martinez Legaz, J.E., Soubeyran, A.: (2007) A Tabu search scheme for
abstract problems, with applications to the computation of fixed points
Journal of Mathematical Analysis and Applications, 338 (1), 620-627

\vspace{.5cm}

\noindent 3) Atouch, H., Redont, P., Soubeyran, A.: (2007) A New class of alternating
proximal minimization algorithms with costs- to- move. SIAM Journal on
Optimization, 18 (3), 1061-1081

\vspace{.5cm}

\noindent 4) Attouch, H., Bolte, J., Redont, P., Soubeyran, A.: (2008) Alternating proximal
algorithms for weakly coupled convex minimization problems. Applications to
dynamical games and PDE's. Journal of Convex Analysis,15
(3), 485-506

\vspace{.5cm}

\noindent 5) Cruz Neto, J. X., Oliveira, P. R., Souza, S. S., Soubeyran, A.: (2010 ) A Proximal
point algorithm with separable Bregman distances for quasiconvex
optimization over the nonnegative orthant. European Journal of Operation
Research, (201) (2), 365-376

\vspace{.5cm}

\noindent 6) Luc, D. T., Sarabi, E.,  Soubeyran, A.: ( 2010) Existence of solutions in
variational relations problems without convexity. Journal of Mathematical
Analysis and Applications,  364 (2), 544-555

\vspace{.5cm}

\noindent 7) Attouch, H., Bolte, J.,  Redont, P., Soubeyran, A.: (2010) Proximal
Alternating Minimization and Projection Methods for Nonconvex Problems. An
Approach Based on the Kurdyka- Lojasiewicz Inequality, Mathematics of
Operations Research, 35 (2), 438-457

\vspace{.5cm}

\noindent 8) Attouch, H., Soubeyran, A.: (2010) Local search proximal algorithms as
decision dynamicswith costs to move, Set-Valued and Variational Analysis, 19 (1), 157-177

\vspace{.5cm}

\noindent 9) Moreno, F., Oliveira, P. R., Soubeyran, A.: (2012) A proximal algorithm with
quasi distance. Application to habits formation, Optimization, 61 (12), 1383-1403

\vspace{.5cm}

\noindent 10) Flores Bazan, F., Luc, T., Soubeyran, A.: (2012) Maximal
elements under reference-dependent preferences with applications to
behavioral traps and games. Journal of Optimization
Theory and Applications, 155 (3), 883-901.

\vspace{.5cm}

\noindent 11) Godal, O., Flam, S., Soubeyran, A.: ( 2012) Gradient
differences and bilateral barters,1-20, I First  http://dx.doi.org/10.1080/02331934.2012.679940

\vspace{.5cm}

\noindent 12) Luc, D. T., Soubeyran, A.: (2013) Variable preference relations:
Existence of maximal elements. Journal of Mathematical Economics, 49(4),
251-262

\vspace{.5cm}

\noindent 13) Cruz Neto, J. X., Oliveira, P. R., Soares Jr, P. A., Soubeyran, A.
(2013) Learning how to play Nash, potential games and alternating
minimization method for structured non convex problems on Riemannian
manifolds. Journal of Convex Analysis, 20(2), 395-438

\vspace{.5cm}

\noindent 14) Cruz Neto, J. X., Oliveira, P. R., Soares Jr, P. A., Soubeyran, A.: (2013)
Proximal Point Method on Finslerian Manifolds and the \textquotedblleft
Effort--Accuracy\textquotedblright\ Trade-off. Journal of Optimization
Theory and Applications, 1-19. \\
 http://dx.doi.org/10.1007/s10957-013-0483-5

\vspace{.5cm}

\noindent 15) Villacorta.K, Oliveira.P, Soubeyran.A. (2013). A Trust-Region Method for
Unconstrained Multiobjective Problems with Applications in Satisficing
Processes. Journal of Optimization Theory and Applications, 160 (3),  865-889

\vspace{.5cm}

\noindent 16) Bento, G. C., Cruz Neto, J. X., Oliveira, P. R., Soubeyran, A.:
(2014). The self regulation problem as an inexact steepest descent method
for multicriteria optimization. European Journal of Operational Research.
235, 494-502

\vspace{.5cm}

\noindent 17) Long, N., Soubeyran, A., \& Soubeyran, R.(2013). Knowledge Accumulation
within an Organization. Accepted for publication in the International
Economic Review.
\vspace{.5cm}

\noindent 18) Bento, G., Cruz Neto, J. X., Soubeyran, A.: (2014) A Proximal Point-Type Method
for Multicriteria Optimization. Set-Valued and Variational Analysis: theory and applications, v. 22, p. 557-573, 2014.

\section*{In revision}

\noindent 19) Bento, G. C., Soubeyran, A.: ( 2014) Generalized Inexact
Proximal Algorithms: Habit's Formation with Resistance to Change, following
Worthwhile Changes.  JOTA.

\vspace{.5cm}

\noindent 20) Bento, G. C., Cruz Neto, J. X.,  Soares, P., Soubeyran, A.: (2014) Variational Rationality and the Equilibrium Problem on Hadamard
Manifolds. JOTA

\vspace{.5cm}

\section*{Submitted papers}

\noindent 21) Martinez Legaz, J. E., Soubeyran, A.: (2013) Convergence
in sequential decision making with learning and costs to change, soumis
Systems and Control Letters.

\vspace{.5cm}

\noindent 22) Bao, T., Mordukhovich, B., Soubeyran, A.: (2013) Variational Analysis in Psychological Modelling.  JOTA.

\vspace{.5cm}

\noindent 23) Bento, G. C.,  Cruz Neto, J. X., Soares, P.,  Soubeyran, A.: (2014) Bilevel equilibrium problems as limits of variational
traps.  JOTA

\vspace{.5cm}

\noindent 24) Bao, T., Mordukhovich, B., Soubeyran, A.: (2013) Variational principles, variational rationality and the Sen capability
approach of well being.  EJOR.

\vspace{.5cm}

\noindent 25) Moreno, F., Oliveira, P. R., Soubeyran, A.:(2013) Dual
equilibrium problems: how a succession of aspiration points converges to an
equilibrium. To be submitted.

\section*{ARXIV papers}

\noindent 26) Attouch, H., Soubeyran, A.: (2009) Worthwhile-to-move
behaviors as temporary satisficing without too many sacrificing
processes. 

\vspace{.5cm}

\noindent 27) Bao, T., Mordukhovich, B., Soubeyran, A.: (2013) Variational principles in
models of Behavioral Sciences. 

\vspace{.5cm}

\noindent 28) Bento, G. C., Soubeyran, A.: (2013) Some Comparisons between
Variational rationality, habitual Domain and DMCS
approaches.

\section*{Pre prints}

\noindent 29) Soubeyran, A.: (2009) Variational rationality, a theory of individual
stability and change: worthwhile and ambidextry behaviors.

\vspace{.5cm}

\noindent 30) Soubeyran, A.: (2010) Variational rationality and the unsatisfied man:
routines and the course pursuit between aspirations, capabilities and
beliefs.

\subsection*{Other references}

\noindent 31) Tversky, A., Kahneman, D.: Loss aversion in riskless choice: a
reference dependent model. Q. J. Econ. \textbf{106}(4), 1039-1061 (1991)

\vspace{.5cm}

\noindent 32) Simon. H.: A behavioral model of rational choice. Q. J. Econ. 
\textbf{69}, 99-118 (1955)

\vspace{.5cm}

\noindent 33) Kahneman, D., Tversky, A.: Prospect theory: An analysis of
decision under risk. Econometrica \textbf{47}(2), 263-291 (1979)

\vspace{.5cm}

\noindent 34) Abdellaoui, M., Bleichrodt, H., Paraschiv, C.: Loss aversion
under prospect theory: A parameter-free measurement. Manage. Sci. \textbf{53}%
(10), 1659-1674 (2007)

\vspace{.5cm}

\noindent 35) K\"{o}bberling, V., Wakker, P.: An index of loss aversion. J.
Econ. Theory. \textbf{122},119--131 (2005)

\vspace{.5cm}

\noindent 36) Verplanken, B., Aarts, H.: Habit, attitude and planned
behaviour: Is habit an empty construct or an interesting case of
goal-directed automaticity? Eur. Rev. Soc. Psychol. \textbf{10}, 101--134
(1999)

\vspace{.5cm}

\noindent 37) Duhigg, C.: The power of habits. Why we do what we do in life
and business. Random House, New York (2012)

\vspace{.5cm}

\noindent 38) Costa, A., Kallick, B.: Habits of mind. A developmental
deries. Association for Supervision and Curriculum Development, Alexandria,
VA (2000)

\vspace{.5cm}

\noindent 39)  Gardner, B.: Habit as automaticity, not frequency. Eur. Health
Psychol. \textbf{14} (2) 32-36 (2012)

\vspace{.5cm}

\noindent 40) Verplanken, B., Wood, W.: Interventions to break and create
consumer habits. J. Public Policy Mark. \textbf{25} (1), 90 - 103 (2006)

\vspace{.5cm}

\noindent 41) Bargh, J. A.: The four horsemen of automaticity: Awareness,
intention, efficiency and control in social cognition. In R.S. Wyer and T.K.
Srull (Eds), Handbook of Social Cognition. Hillsdale, N. J. Erlbaum (1994).

\vspace{.5cm}

\noindent 42) Aarts, H., Dijksterhuis, A.: The automatic activation of goal
directed behaviour: The case of travel habit. J. Environ. Psychol. \textbf{20%
}, 75-82 (2000)

\vspace{.5cm}

\noindent 43) Verplanken, B., Aarts, H.: Habit, attitude and planned
behaviour: Is habit an empty construct or an interesting case of
goal-directed automaticity? Eur. Rev. Soc. Psychol. \textbf{10}, 101 -- 134
(1999)

\vspace{.5cm}

\noindent 44) Lally, P., Van Jaarsveld, C., Potts, H., Wardle, J.: How are
habits formed: Modelling habit formation in the real world. Eur. J. Soc.
Psychol. \textbf{40}(6), 998-1009 (2010)

\vspace{.5cm}

\noindent 45) Lewin, K.: Field theory in social sciences. NY: Harper \& Row,
New York (1951)

\vspace{.5cm}

\noindent 46) Rumelt, R.: Inertia and transformation. Montgomery, Cynthia
A., ed., Resources in an Evolutionary Perspective: Towards a Synthesis of
Evolutionary and Resource-Based Approaches to Strategy, Kluwer Academic
Publishers, Norwell, Mass. 101-132 (1995)

\vspace{.5cm}

\noindent 47) Li, B.: The reviews and prospects of studies on habits.
Psychol. Sci. \textbf{35}(3) 745-753 ( 2012)

\vspace{.5cm}

\noindent 48) Ajzen. I.: The theory of planned behavior. Organ. Behav. Hum.
Dec. \textbf{50}, 179-211 (1991)

\vspace{.5cm}

\noindent 49) Verplanken, B., Orbell, S.: Reflections on past behavior: a
self-report index of habit strength. J. Appl. Soc. Psychol. \textbf{33}(6),
1313--1330 (2003)

\vspace{.5cm}

\noindent 50) Wood, W., Neal, D.: A new look at habits and the habit-goal
interface. Psychol. Rev. \textbf{114}(4), 843-863 ( 2007)

\vspace{.5cm}

\noindent 51) Becker.G, Murphy, K.: A theory of rational addiction. J.
Polit. Econ. \textbf{96}, 675-700 (1988)

\vspace{.5cm}

\noindent 52) Abel, A.: Asset prices under habit formation and catching up
with the Joneses. Am. Econ. Rev. \textbf{80}(2), 38-42 (1990)

\vspace{.5cm}

\noindent 53) Carroll, C.: Solving consumption models with multiplicative
habits. Econ. Lett. \textbf{68}, 67-77 (2000)

\vspace{.5cm}

\noindent 54) Wendner, R.: Do habits raise consumption growth? Research in
Economics \textbf{57}, 151--163 (2003)

\vspace{.5cm}

\noindent 55) Becker, M.: Organizational routines: a review of the
literature. Oxford J. Econ. \& Soc. Sci.- Industrial and Corp. Ch. \textbf{13%
}(4), 643-678 (2004)

\vspace{.5cm}

\noindent 56) Cohen, M., Grossberg, S.: Absolute stability of global
pattern formation and parallel memory storage by competitive neural
networks. IEEE Transactions on Systems, Man, and Cybernetics, SMC, {\bf 13}, 815-826 (1983)

\vspace{.5cm}

\noindent 57) Grossberg, S.: Contour enhancement, short term memory, and
constancies in reverberating neural network, Stud. Appl. Math. {\bf 3} (3)
213-256, (1973)

\vspace{.5cm}

\noindent 58) Grossberg, S.: Competition, decision, and consensus, J.
Math. Anal. 66, 47-93, (1978)

\vspace{.5cm}

\noindent 59) Grossberg, S.: How does a brain build a cognitive code?
Psyhal. Rer. {\bf 87} (1), 1-51, (1980) 

\vspace{.5cm}

\noindent 60)  Kunreuther, H., Meyer, R., Zeckhauser, R., Slovic, P.,
Schwartz, B., Schade, C., Hogarth, R.: High stakes decision
making: Normative, descriptive and prescriptive considerations. Marketing
Letters, {\bf 13} (3), 259-268 (1980)

\end{document}